\newcommand{\F}{F}
\newcommand{\R}{\mathbb{R}}
\newcommand{\Q}{\mathbb{Q}}
\newcommand{\K}{\mathbb{K}}
\renewcommand{\L}{\mathbb{L}}
\newcommand{\Z}{\mathbb{Z}}
\newcommand{\Int}{\mathop{\mathrm{Int}}\nolimits}
\newtheorem{theorem}{Theorem}[section]
\newtheorem{propos}[theorem]{Proposition}
\newtheorem{lem}[theorem]{Lemma}
\theoremstyle{definition}
\newtheorem{defin}[theorem]{Definition}
\newtheorem{quest}[theorem]{Question}
\author{Alexander A. Gaifullin, Sergey A. Gaifullin}
\thanks{The first author was partially supported by the Russian Foundation for Basic Research (projects 12-01-31444 and 13-01-12469), by a grant of the President of the Russian Federation (project MD-4458.2012.1), by a grant of the Government of the Russian Federation (project 11.G34.31.0053),  by a program of the Branch of Mathematical Sciences of the Russian Academy of Sciences, and by a grant from Dmitry Zimin's ``Dynasty'' foundation.}
\thanks{The second author was partially supported by the Russian Foundation for Basic Research (projects 12-01-31342 and 12-01-00704) and by the Ministry of Education and Science of the Russian Federation (project 8214)}
\keywords{Flexible polyhedra, polyhedral surfaces, places of fields.}
\subjclass[2010]{Primary: 52B70, 52B10; Secondary: 13A18.}
\title{Deformations of period lattices of flexible polyhedral surfaces}
\date{}
\address{Alexander A. Gaifullin:\newline
Steklov Mathematical Institute, Moscow, Russia\newline
Lomonosov Moscow State University, Moscow, Russia\newline 
Kharkevich Institute for Information Transmission Problems, Moscow, Russia\newline 
Demidov Yaroslavl State University, Yaroslavl, Russia}
\email{agaif@mi.ras.ru}
\address{Sergey A. Gaifullin:\newline
Lomonosov Moscow State University, Moscow, Russia\newline 
Higher School of Economics, Moscow, Russia}
\email{sgayf@yandex.ru}
\begin{document}

\begin{abstract}
In the end of the 19th century Bricard discovered a phenomenon of flexible polyhedra, that is, polyhedra with rigid faces and hinges at edges that admit non-trivial flexes. One of the most important results in this field is a theorem of Sabitov asserting that the volume of a flexible polyhedron is constant during the flexion.
In this paper we study flexible polyhedral surfaces in~$\R^3$  two-periodic with respect to  translations by two non-colinear vectors that can vary continuously during the flexion. The main result is that the period lattice of a flexible two-periodic surface homeomorphic to a plane cannot have two degrees of freedom.

\end{abstract}

\maketitle
\section{Introduction}
We denote by~$\R^3$ the $3$-dimensional Euclidean space.
Let $S\subset\R^3$ be a polyhedral surface that is homeomorphic to a plane and is situated ``near'' the horizontal plane~$\R^2$. Suppose that $S$ has rigid faces and hinges at edges. This means that $S$ is allowed to flex so that the faces remain congruent to themselves, while the dihedral angles at edges change continuously. For example, let us take for $S$ the horizontal plane $z=0$ with hinges at lines $y=k$, $k\in\Z$. Then $S$ can be shrunk in the direction of the $x$-axis as it is shown in Figure~\ref{fig_grebenka}(a). Now, take for $S$ the horizontal plane with hinges both at lines $y=k$ and $x=k$, $k\in\Z$. Then $S$ can be shrunk either in the direction of the $x$-axis, or in the direction of the $y$-axis. However, once we have already started to shrink~$S$ in  the direction of the $x$-axis, we are not able any more to shrink it in the direction of the $y$-axis, and vice versa, see Figure~\ref{fig_grebenka}(b). 
In this paper we consider the following natural question:

\textit{Is it possible to construct a polyhedral surface $S\subset\R^3$ that can be shrunk near the horizontal plane independently in two different directions? In other words, is it possible that there exists a two-parametric flexion of~$S$ such that varying the first parameter, we shrink $S$ in the first direction, and varying the second parameter, we shrink~$S$ in the second direction?}

The authors are indebted to an architect Sergei Kolchin who suggested this question to them.

\begin{figure}
\unitlength=.4mm
\begin{picture}(236,241)
\put(115,0){\textit{b}}
\put(115,173){\textit{a}}

\put(52,44){%
\begin{picture}(0,0)
\multiput(-52,-24)(8,12){5}{\line(1,0){72}}
\multiput(-52,-24)(12,0){7}{\line(2,3){32}}
\end{picture}%
}

\put(100,44){\vector(1,0){36}}

\put(184,44){%
\begin{picture}(0,0)
\multiput(-46,-27)(8,12){5}{%
\begin{picture}(0,0)
\multiput(0,0)(20,0){3}{\line(5,3){10}}
\multiput(10,6)(20,0){3}{\line(5,-3){10}}
\end{picture}%
}

\multiput(-46,-27)(20,0){4}{\line(2,3){32}}
\multiput(-36,-21)(20,0){3}{\line(2,3){32}}
\end{picture}%
}

\put(156,128){\vector(1,0){36}}
\put(170,134){\line(2,-3){8}}
\put(170,122){\line(2,3){8}}

\put(74,77){\vector(2,3){12}}

\put(206,77){\vector(2,3){12}}
\put(205,86){\line(1,0){14}}
\put(209,92){\line(1,-2){6}}

\put(108,128){%
\begin{picture}(0,0)
\multiput(-48,-21)(12,18){3}{\line(1,0){72}}
\multiput(-42,-6)(12,18){2}{\line(1,0){72}}

\multiput(-48,-21)(12,0){7}{%
\begin{picture}(0,0)
\multiput(0,0)(12,18){2}{\line(2,5){6}}
\multiput(6,15)(12,18){2}{\qbezier(0,0)(3,1.5)(6,3)}
\end{picture}%
}

\end{picture}%
}

\put(52,217){%
\begin{picture}(0,0)
\put(-52,-24){\line(1,0){72}}
\put(52,24){\line(-1,0){72}}
\multiput(-52,-24)(12,0){7}{\line(2,3){32}}
\end{picture}%
}

\put(100,217){\vector(1,0){36}}

\put(184,217){%
\begin{picture}(0,0)
\multiput(-46,-27)(32,48){2}{%
\begin{picture}(0,0)
\multiput(0,0)(20,0){3}{\line(5,3){10}}
\multiput(10,6)(20,0){3}{\line(5,-3){10}}
\end{picture}%
}

\multiput(-46,-27)(20,0){4}{\line(2,3){32}}
\multiput(-36,-21)(20,0){3}{\line(2,3){32}}
\end{picture}%
}

\end{picture}
\caption{Simplest examples of flexible polyhedral surfaces}\label{fig_grebenka}
\end{figure}
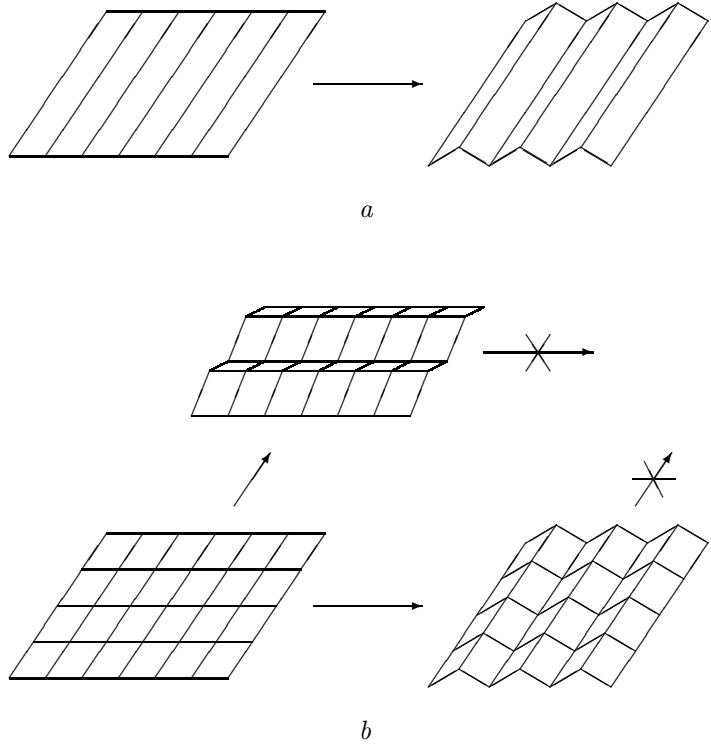

 In this paper we focus on the case of a surface $S$ invariant under the translations by two non-colinear vectors~$a$ and~$b$. The quotient of~$S$ by the action of the lattice~$\Lambda$ generated by~$a$ and~$b$ is homotopy equivalent to the torus. On the other hand, $S/\Lambda$ is a two-dimensional manifold without boundary. Hence $S/\Lambda$ is homeomorphic to the torus. In particular, it is compact. Therefore the surface $S$ is automatically contained in a layer between two planes parallel to the vectors~$a$ and~$b$. By a rotation, we may achieve that the distances from the points of~$S$ to the standard horizontal plane~$\R^2$ are uniformly  bounded.  

Now we proceed with a more rigorous statement of the problem.
A \textit{$2$-periodic polyhedral surface\/} is a triple~$(S,a,b)$ such that $S\subset\R^3$ is a polyhedral surface (i.\,e. a surface without boundary glued out of polygons along their sides), and $a,b\in\R^3$ are non-colinear vectors such that

1) $S$ is  invariant under the translations by~$a$ and~$b$,
 
2) The action of the lattice $\Lambda$ generated by~$a$ and~$b$ on the surface~$S$ is \textit{cocompact\/}, i.\,e., the quotient~$S/\Lambda$ is compact.

As it has been mentioned above, the second condition holds automatically if $S$ is homeomorphic to a plane.

We allow the polyhedral surface $S$ to flex so that it remains $2$-periodic and the period lattice changes continuously. So by definition, a \textit{flex
of a 2-periodic polyhedral surface\/}~$(S,a,b)$ is a continuous deformation $(S(t),a(t),b(t))$, $t\in[0,1]$, $S(0)=S$, $a(0)=a$, $b(0)=b$, such that $S(t)$ is a flex of~$S$, and for each~$t$, the triple $(S(t),a(t),b(t))$ is a $2$-periodic polyhedral surface.

Up to rotations of~$\R^3$ the period lattice~$\Lambda$ of the $2$-periodic polyhedral surface $(S,a,b)$ is determined by the Gram matrix~$G$ of the vectors~$a$ and~$b$. We are interested in the deformations of the period lattice~$\Lambda$ under flexes. But we would like to neglect rotations. So we are interested in the deformations of the Gram matrix~$G$ induced by flexions of~$(S,a,b)$.
The main result of the present paper says that if $S$ is homeomorphic to a plane, then only one-parametric deformations of~$G$ may occur. This means that we cannot find a two-parametric flexion that yields a truly two-parametric deformation of~$G$.
This can be formulated more formally as follows. Consider all possible $2$-periodic polyhedral surfaces $(S',a',b')$ that can be obtained from $(S,a,b)$ by  flexions.  Let $\mathcal{G}=\mathcal{G}(S,a,b)$ be the set of all matrices~$G'$ that appear as the Gram matrices of the vectors $a',b'$ for such $2$-periodic surfaces $(S',a',b')$.

\begin{theorem}\label{theorem_main}
Let $(S,a,b)$ be a $2$-periodic polyhedral surface homeomorphic to a plane. Then the set $\mathcal G(S,a,b)$ is contained in a one-dimensional real affine algebraic variety.
\end{theorem}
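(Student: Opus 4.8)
The plan is to translate the geometric statement into a bound on a transcendence degree and to control that degree by Sabitov's theory of places. First I would fix the combinatorial type of $S/\Lambda$ together with all edge lengths, which stay constant along any flex, and regard a flexed surface as a point of a real affine algebraic set $\V$: its coordinates are the positions of the finitely many vertices of a fundamental domain together with the components of $a$ and $b$, and its defining equations are the relations $|p_i - p_j - k a - l b|^2 = \ell_e^2$ coming from the fixed edge lengths, together with the linear periodicity relations. Since the faces are rigid, the essential variables are the hinge (dihedral) angles. The three Gram entries $g_{11} = \langle a, a\rangle$, $g_{12} = \langle a,b\rangle$, $g_{22} = \langle b, b\rangle$ are polynomial functions on $\V$, and $\mathcal G(S,a,b)$ is the image under $\Phi = (g_{11}, g_{12}, g_{22})$ of the path-component of $\V(\R)$ containing the initial surface. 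As $\mathcal G$ meets only finitely many irreducible components of $\V$, it suffices to prove, for the function field $K$ of each such component, that $\operatorname{tr.deg}_{\R}\R(g_{11}, g_{12}, g_{22}) \le 1$; the relations this yields cut out the required one-dimensional variety.

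Next I would use planarity to present the periods through edges. Since $S$ is homeomorphic to a plane and $S/\Lambda$ is a torus on which $\Lambda \cong \Z^2$ acts freely, $S$ is the universal cover of $S/\Lambda$ and $\Lambda$ is its fundamental group. Choosing edge-paths $\gamma_a$ and $\gamma_b$ in $S$ joining a base vertex to its translates by $a$ and $b$, I can write $a$ and $b$ as fixed integer combinations of the edge vectors $\mathbf e_1, \dots, \mathbf e_m$. Hence each $g_{ij}$ is an integer combination of the scalar products $\langle \mathbf e_k, \mathbf e_l\rangle$, in which the diagonal products $\langle \mathbf e_k, \mathbf e_k\rangle = \ell_k^2$ are constants. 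This expresses all three entries through the same finitely many hinge variables and, crucially, ties the growth of \emph{both} periods to one common collection of edge scalar products.

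The heart of the argument, and the step I expect to be the hardest, is a finiteness statement for places, generalizing Sabitov's theorem that the volume of a flexible polyhedron is algebraic over the field of edge lengths. Writing a place of $K$ as a homomorphism $\varphi\colon K \to \L\cup\{\infty\}$ to a field with infinity, I would invoke the valuation criterion for integrality: an element of $K$ is integral over a subring $R$ exactly when $\varphi(x)\ne\infty$ for every place $\varphi$ finite on $R$. The geometric input — and here the hypothesis that $S$ is homeomorphic to a plane is essential — is that the two periods are rigidly linked: because $S$ is the entire universal cover of the torus $S/\Lambda$, the edge-vector presentation of $\gamma_a$ and $\gamma_b$ forces any degeneration of one period to propagate, through the developing of a single fundamental domain, to the other. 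Thus no place finite on the squared edge lengths can drive the two period directions to infinity independently, which is precisely the failure that a transcendence degree of $2$ would require.

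Granting this finiteness statement, the valuation criterion converts it into integral, hence algebraic, dependencies that pin $\operatorname{tr.deg}_{\R}\R(g_{11}, g_{12}, g_{22})$ to at most $1$ on each component. The Zariski closure of the image of that component under $\Phi$ is then a curve, and $\mathcal G(S,a,b)$, lying in the union of finitely many such curves, is contained in a one-dimensional real affine algebraic variety, as required. The main obstacle is the finiteness-of-places lemma itself: Sabitov's argument is tailored to a compact polyhedron whose volume is genuinely rigid, whereas here the periods are \emph{not} rigid, so the statement must be the sharper one that only a single transcendental can appear. Carrying this out will require a careful induction on the degenerating configuration together with a precise use of planarity to exclude a second, independent direction of blow-up, and to ensure that on each component exactly one period can serve as the free parameter.
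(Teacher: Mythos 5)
Your overall strategy --- fix the combinatorics and edge lengths, view the Gram entries as functions on a configuration variety, and control them by a finiteness statement for places that exploits planarity --- is the same as the paper's. But there are two genuine gaps. First, the step ``the valuation criterion converts it into integral, hence algebraic, dependencies that pin $\operatorname{tr.deg}$ to at most $1$'' does not work as stated. The standard criterion (an element is integral over $R$ iff every place finite on $R$ is finite on it) produces \emph{one} integral dependence for \emph{one} element; it gives you no mechanism for extracting \emph{two independent} relations among the triple $g_{11},g_{12},g_{22}$, and the paper explicitly flags this as the point where the situation is harder than Sabitov's. The actual mechanism (Proposition~\ref{propos_main}) is to form two localizations $R_1=R[1/(\lambda,\lambda)]$ and $R_2=R[1/(\lambda,\mu)]$ over all primitive $\lambda$ (resp.\ all non-proportional primitive pairs), show that the ideal generated by the adjoined inverses must be the unit ideal (otherwise a maximal ideal containing it would yield a place contradicting the key lemma), and read off from ``$1\in I$'' an explicit polynomial identity whose top homogeneous part is a product of the quadratic forms $m^2X+2mkY+k^2Z$ (discriminant $0$) in one case and of forms $AX+BY+CZ$ with $B^2\neq 4AC$ in the other. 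Coprimality of these two leading forms over $\Q$ is what guarantees the two relations stay independent and cut out a curve; nothing in your write-up produces or replaces this.

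Second, your key finiteness statement is both unproved and imprecisely formulated. ``No place finite on the squared edge lengths can drive the two period directions to infinity independently'' is too weak to feed the algebra above; what is needed (Lemma~\ref{lem_main}) is that for every such place there is a \emph{basis} $\lambda,\mu$ of $\Lambda$, depending on the place, with both $(\lambda,\lambda)$ and $(\lambda,\mu)$ finite --- note that the finite quantities are one squared length and one mixed inner product, not two squared lengths. The heuristic ``degeneration of one period propagates to the other through the developing of a fundamental domain'' does not yield this: the proof is an induction on the number of non-special vertex orbits, removing ``empty triangles'' and performing $\Lambda$-equivariant diagonal flips governed by the Connelly--Sabitov--Walz place lemma (Lemma~\ref{lem_CSW}), with the base case resting on an explicit classification of the triangulation when every vertex is joined to a translate of itself. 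As written, your proposal correctly locates the hard step but supplies neither the statement in usable form nor an argument for it.
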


This theorem can be generalized to the case of a non-embedded polyhedral surface, see Theorem~\ref{main_t}.

The interest to results of such kind originated from the famous result of Sabitov~\cite{Sab96},~\cite{Sab98} that the volume of a (compact) flexible polyhedron in~$\R^3$ remains constant under the flexion. Sabitov obtained this result by proving that the volume~$V$ of every simplicial polyhedron satisfies a polynomial relation of the form
\begin{equation}\label{eq_V}
V^{2N}+a_1(\ell)V^{2N-2}+\cdots+a_N(\ell)=0,
\end{equation}
where $a_i(\ell)$ are polynomials with rational coefficients in the squares of the edge lengths of the polyhedron. (The number~$N$ and  the polynomials~$a_i$ depend on the combinatorial structure of the polyhedron.) The same result for polyhedra of dimensions $n\ge 4$ has recently been obtained by one of the authors~\cite{Gai11},~\cite{Gai12}. 

A natural question is which other invariants of polyhedra satisfy polynomial relations of the form~\eqref{eq_V}. In the periodic setting, the natural invariants are the coefficients $g_{11}$, $g_{12}$, and~$g_{22}$ of the Gram matrix of the period lattice. Though we cannot obtain a polynomial relation of the form~\eqref{eq_V} for any of these coefficients, we shall prove that the three coefficients $g_{11}$, $g_{12}$, and~$g_{22}$, and the set~$\ell$ of the squares of edge lengths are subject to two polynomial relations such that for any given~$\ell$, these relations yield an affine variety of dimension not greater than~$1$. 

Some ideas of our proof of Theorem~\ref{theorem_main} are inspired by Sabitov's theorem, and especially by another proof of Sabitov's theorem obtained by Connelly, Sabitov, and Walz~\cite{CSW97}. The main tool is theory of places (see section~\ref{section_places}). It is standard to use  places to prove that certain element is integral over the given ring. Our situation is more difficult, namely, we need to prove that among the three given elements~$g_{11}$, $g_{12}$, and~$g_{22}$ there exist at least two independent relations over the given ring. 


Notice that we can easily construct $2$-periodic polyhedral surfaces with arbitrarily large number of degrees of freedom  taking connected sums of the plane with flexible polyhedra (see Figure~\ref{fig_flex}). Such flexes can be called ``local'', since they do not change the surface off some small disks, and, in particular, do not affect the period lattice. Theorem~\ref{theorem_main} describes the ``global behavior'' of  flexible $2$-periodic polyhedral surfaces.   

\begin{figure}
\input{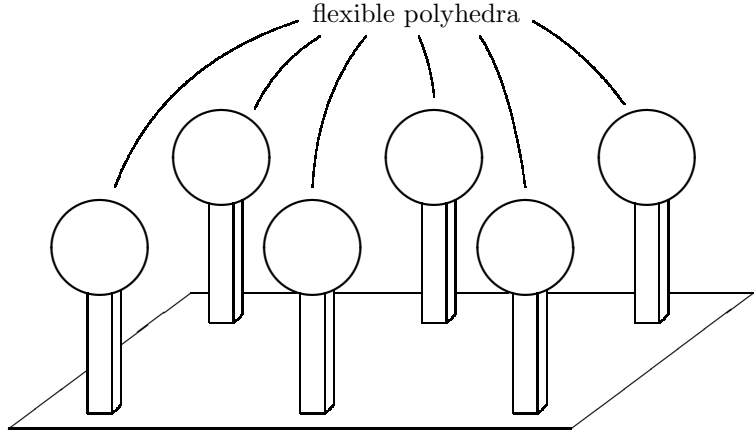}
\caption{2-periodic connected sum of the plane with flexible polyhedra}\label{fig_flex}
\end{figure}

Now, we would like to formulate two natural questions, which remain open.


\begin{quest}
Does the assertion of Theorem~\ref{theorem_main} holds for an arbitrary $2$-periodic polyhedral surface, not necessarily homeomorphic to a plane? 
\end{quest}

\begin{quest}
Let $(S,a_1,\ldots,a_k)$ be a $k$-dimensional $k$-periodic polyhedral surface in~$\R^n$. What is the maximal possible dimension of~$\mathcal{G}(S,a_1,\ldots,a_k)$?
\end{quest}

The authors are grateful to V.\,M.~Buchstaber and I.\,Kh.~Sabitov for useful discussions.

\section{Examples}\label{section_examples}

The set $\mathcal{G}(S,a,b)$ depends not only on the polyhedral surface~$S$, but also on the vectors~$a$ and~$b$. If $\tilde a$, $\tilde b$ is another basis of the same lattice~$\Lambda$, $(\tilde a,\tilde b)=(a,b)C$, then the usual formula
$\widetilde{G}=C^TGC$ provides a canonical affine isomorphism of the sets~$\mathcal{G}(S,a,b)$ and $\mathcal{G}(S,\tilde a,\tilde b)$. We shall identify them and denote this set by~$\mathcal{G}(S,\Lambda)$.

Now, let us consider a sublattice~$\widetilde{\Lambda}$ of the lattice~$\Lambda$. Let $a,b$ be a basis of~$\Lambda$ and let $\tilde a$, $\tilde b$ be a basis of~$\widetilde{\Lambda}$. Then the same formula $\widetilde{G}=C^TGC$ yields the affine embedding $\mathcal{G}(S,\Lambda)\hookrightarrow\mathcal{G}(S,\widetilde{\Lambda})$. Nevertheless, this embedding is not necessarily an isomorphism, since the polyhedral surface $S$ may have flexes~$S_t$ such that $S_t$ remains $2$-periodic with respect to a continuous deformation~$\widetilde{\Lambda}_t$ of the lattice~$\widetilde{\Lambda}$, but does not remain $2$-periodic with respect to any continuous deformation of the lattice~$\Lambda$.

Let us give an example. Consider a plane and fix a point~$p$ in it and two non-collinear vectors $a$ and $b$ parallel to it. Let us divide this plane into triangles by the straight lines parallel to $a$ through the points $p+kb$, $k\in\mathbb{Z}$, the straight lines parallel to $b$ through the points $p+ka$, $k\in \mathbb{Z}$, and the straight lines parallel to $a-b$ through the points $p+ka$, $k\in \mathbb{Z}$. Let $S$ be the polyhedral surface consisting of all these triangles. It is easy to see that the $2$-periodic polyhedral surface $(S, \Lambda)$ is not flexible, where $\Lambda=\langle a,b\rangle$. However, the surface $S$ will become flexible if we replace $\Lambda$ by a sublattice of it.

If we consider the period lattice $\widetilde{\Lambda}=\langle a,2b\rangle$, then the $2$-periodic polyhedral surface $(S, \widetilde{\Lambda})$ admits only one type of flexion. We can shrink it in the direction orthogonal to~$a$. Then for the Gram matrix of the basis of~$\widetilde{\Lambda}$, we have $g_{11}=const$, $g_{12}=const$,  and $g_{22}$ varies from~$\frac{4(a,b)^2}{|a|^2}$ to~$4|b|^2$. Hence $\mathcal{G}(S,\widetilde{\Lambda})$ is a segment.

If we consider the period lattice $\widehat{\Lambda}=\langle 2a,2b\rangle$, then we can  shrink $(S,\widehat{\Lambda})$ in the direction orthogonal to $a$, in the direction orthogonal to $b$, or in the direction orthogonal to $a-b$. Hence $\mathcal{G}(S,\widehat{\Lambda})$ contains three segments. (We do not claim that  $\mathcal{G}(S,\widehat{\Lambda})$ consists only of these three segments.)

Now, let us give an example of $(S,\Lambda)$ such that the Zariski closure of $\mathcal{G}(S,\Lambda)$ has an irreducible component which is not a straight line. 
This example is a well-known flexible surface consisting of parallelograms (see Figure~\ref{fig_miura}). This surface can be folded so that full lines are mountain foldings and dashed lines are valley foldings, cf.~\cite{Sta11}. Such folding was known in ancient Japanese origami technique and is called the Miura-ori folding after Miura who suggested to apply this type of folding for solar panels. The polyhedral surface~$S$ shown in Figure~\ref{fig_miura} will be considered with the period lattice   
$\Lambda=\langle a,b\rangle$.

\begin{figure}

\unitlength=0.4mm
\begin{picture}(250, 175)
\multiput(0,0)(72,0){3}%
{\begin{picture}(100,200)
\multiput(0,0)(8,12){3}%
{\line(2,3){7}}
\multiput(0,70)(8,12){3}%
{\line(2,3){7}}
\multiput(0,140)(8,12){3}%
{\line(2,3){7}}
\multiput(23,35)(-8,12){3}%
{\line(-2,3){7}}
\multiput(23,105)(-8,12){3}%
{\line(-2,3){7}}

\put(0,0){\line(1,0){36}}
\put(0,70){\line(1,0){36}}
\put(0,140){\line(1,0){36}}
\multiput(23,35)(13,0){3}%
{\line(1,0){10}}
\multiput(23,105)(13,0){3}%
{\line(1,0){10}}
\multiput(23,175)(13,0){3}%
{\line(1,0){10}}

\put(36,0){\line(2,3){23}}
\put(36,70){\line(2,3){23}}
\put(36,140){\line(2,3){23}}
\put(59,35){\line(-2,3){23}}
\put(59,105){\line(-2,3){23}}

\put(59,35){\line(1,0){36}}
\put(59,105){\line(1,0){36}}
\put(59,175){\line(1,0){36}}
\multiput(36,0)(13,0){3}%
{\line(1,0){10}}
\multiput(36,70)(13,0){3}%
{\line(1,0){10}}
\multiput(36,140)(13,0){3}%
{\line(1,0){10}}

\end{picture} }

\multiput(216,0)(8,12){3}%
{\line(2,3){7}}
\multiput(216,70)(8,12){3}%
{\line(2,3){7}}
\multiput(216,140)(8,12){3}%
{\line(2,3){7}}
\multiput(239,35)(-8,12){3}%
{\line(-2,3){7}}
\multiput(239,105)(-8,12){3}%
{\line(-2,3){7}}

{\thicklines \put(30,17){\vector(1,0){72}}
 \put(30,17){\vector(0,1){70}}
\put(102,17){\circle*{3}}
\put(30,17){\circle*{3}}
\put(30,87){\circle*{3}}
\put(22,50){$\boldsymbol{b}$}
\put(66,20){$\boldsymbol{a}$}
}

\end{picture}

\caption{The Miura-ori folding}\label{fig_miura}

\end{figure}
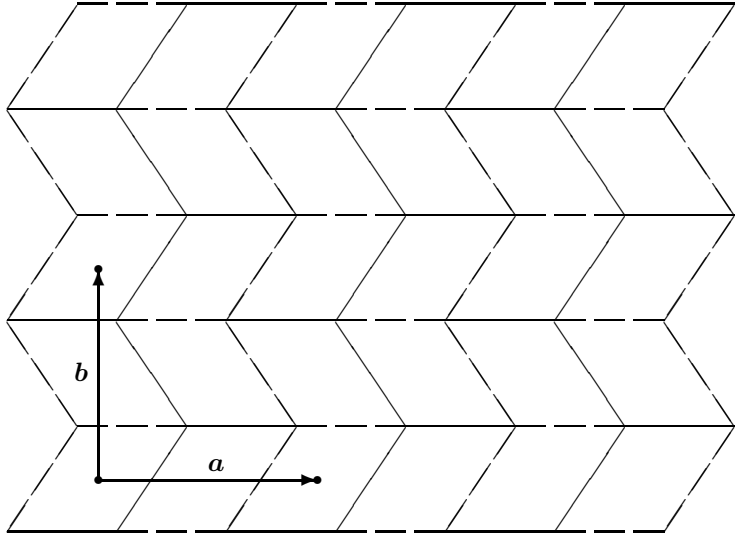

 Since the period lattice~$\Lambda$ changes during the flexion, we shall further denote the initial basis vectors~$a$ and~$b$ by~$a_0$ and~$b_0$ respectively, and we shall use the notation~$a$ and~$b$ for the period vectors after the flex. By a rotation of~$\R^3$ we may achieve that the vectors~$a$ and~$b$ are parallel to the vectors~$a_0$ and~$b_0$ respectively. Take the orthonormal basis $e_1=\frac{a_0}{|a_0|}$, $e_2=\frac{b_0}{|b_0|}$, $e_3$ in~$\R^3$. Let $\alpha$ be the acute angle of a parallelogram of~$S$. Then the initial sides of a parallelogram of~$S$ with acute lower-left angle are the vectors $\xi_0=\bigl(\frac{|a_0|}{2},0,0\bigr)$ and $\eta_0=\bigl(\frac{|b_0|}{2}\cot\alpha,\frac{|b_0|}{2}, 0\bigr)$. During the flexion the sides~$\xi$ and~$\eta$ of this parallelogram remain parallel to the planes $\langle e_1,e_3\rangle$ and $\langle e_1,e_2\rangle$ respectively, $\xi=\bigl(\frac{|a|}{2},0,z\bigr)$ and $\eta=\bigl(x,\frac{|b|}{2}, 0\bigr)$.
Since $(\eta,\eta)=(\eta_0,\eta_0)$ and $(\xi,\eta)=(\xi_0,\eta_0)$, we have
\begin{gather*}
x^2+\frac{|b|^2}{4}=\frac{|b_0|^2}{4\sin^2\alpha}\,,\\
|a|x=\frac12|a_0||b_0|\cot\alpha.
\end{gather*}
Hence,
$$
|a|^2|b|^2\sin^2\alpha-|a|^2|b_0|^2+|a_0|^2|b_0|^2\cos^2\alpha=0.
$$
Therefore, the Gram matrix elements of the period vectors~$a$ and~$b$ satisfy the system of equations
$$
\left\{
\begin{aligned}
&g_{11}g_{22}\sin^2\alpha-g_{11}|b_0|^2+|a_0|^2|b_0|^2\cos^2\alpha=0,\\
&g_{12}=0.
\end{aligned}
\right.
$$
Thus, $\mathcal{G}(S,\Lambda)$ contains a segment of a hyperbola.
 
Certainly,  $\mathcal{G}(S,\Lambda)$ also contains a straight segment corresponding to the shrinking in the direction orthogonal to $a$.

\section{Notation}\label{section_places}

We recall some basic facts on places of fields. For more detailed information and proofs see~\cite{Lan72},~\cite{Pak10}.

Suppose $L$ and $F$ are fields.

\begin{defin}
A mapping $\varphi\colon L\rightarrow F\cup\{\infty\}$ is called a {\it place} of~$L$ into~$F$ if for all $x,y\in L$ we have:

1) $\varphi(x+y)=\varphi(x)+\varphi(y),$

2) $\varphi(xy)=\varphi(x)\varphi(y)$,

3) $\varphi(1)=1$,

\noindent{where it is understood that for $z\in F$ (called {\it finite} $z$), $z\pm\infty = \infty\cdot\infty=\frac{1}{0}=\infty$, and if $z\neq 0$, $z\cdot\infty=\infty$. The expressions $\frac{0}{0}$, $\frac{\infty}{\infty}$, $0\cdot\infty$, $\infty\pm\infty$ are not defined. And it is also understood that 1) and 2) only hold if the right hand side is defined.}
\end{defin} 

It is clear that the restriction of a place to a subfield is a place, and the composition of places is a place. (Taking the composition of places $\varphi\colon L\to F\cup\{\infty\}$ and $\psi\colon F\to E\cup\{\infty\}$, we always put $\psi(\infty)=\infty$.)

\begin{defin}
A mapping of a simplicial complex~$K_1$ to a simplicial complex~$K_2$ is called \textit{simplicial\/} if it maps every simplex of~$K_1$ linearly onto a simplex of~$K_2$. An action of a discrete group~$\Lambda$ on a simplicial complex~$K$ is called \textit{simplicial\/} if every element of~$\Lambda$ acts by a simplicial mapping $K\to K$.
\end{defin}

Let $K$ be a simplicial complex homeomorphic to a plane with a free simplicial action of the group $\Lambda=\langle\alpha,\beta\rangle\cong\Z^2$.
We shall use the additive notation for the group~$\Lambda$. For an element $\lambda\in\Lambda$, we denote by~$T_{\lambda}$ the corresponding simplicial automorphism of~$K$. Then we have $T_{\lambda}T_{\mu}=T_{\lambda+\mu}$.

\begin{defin} Consider a  mapping $\theta\colon K\rightarrow \mathbb{R}^3$ linear on simplices of~$K$ and equivariant with respect to an action of~$\Lambda$ on~$\R^3$ such that $\alpha$ and~$\beta$ act by translations by some vectors~$a=a(\theta)$ and~$b=b(\theta)$. Then the pair~$(K,\theta)$ is called  a {\it 2-periodic polyhedral surface}. If $\theta$ is injective,  the polyhedral surface~$(K,\theta)$ is called \textit{embedded}.
\end{defin}

Let $v_1,\ldots,v_n$ be representatives of all  $\Lambda$-orbits of vertices of~$K$. Consider the field $$\L=\L(K,\Lambda)=\Q(x_{\alpha},y_{\alpha},z_{\alpha},x_{\beta},y_{\beta},z_{\beta}, x_{v_1},y_{v_1},z_{v_1},\ldots, x_{v_n},y_{v_n},z_{v_n}),$$ where $x_{\alpha},y_{\alpha},z_{\alpha},x_{\beta},y_{\beta},z_{\beta}, x_{v_1},y_{v_1},z_{v_1},\ldots, x_{v_n},y_{v_n},z_{v_n}$ are independent  over $\Q$ variables. For $\lambda=m\alpha+k\beta$, we denote $x_{\lambda}=mx_{\alpha}+kx_{\beta}$, $y_{\lambda}=my_{\alpha}+ky_{\beta}$, $z_{\lambda}=mz_{\alpha}+kz_{\beta}$. For $u=T_{\lambda}(v_i)$, we denote $x_u=x_\lambda+x_{v_i}$, $y_u=y_{\lambda}+y_{v_i}$, $z_u=z_{\lambda}+z_{v_i}$. 

If we have a 2-periodic polyhedral surface $(K,\theta)$, then we obtain the specialization homomorphism
$$
\tau_{\theta}\colon\Q[x_{\alpha},y_{\alpha},z_{\alpha},x_{\beta},y_{\beta},z_{\beta}, x_{v_1},y_{v_1},z_{v_1},\ldots, x_{v_n},y_{v_n},z_{v_n}]\to\R
$$
that takes $x_u$, $y_u$ and $z_u$ to the coordinates of the point~$\theta(u)$ for every vertex~$u$, takes $x_{\alpha}$, $y_{\alpha}$, and $z_{\alpha}$ to the coordinates of the vector~$a$, and takes $x_{\beta}$, $y_{\beta}$, and $z_{\beta}$ to the coordinates of the vector~$b$. 

We shall conveniently identify a vertex $v$ of~$K$ with the point $(x_v,y_v,z_v)\in\L^3$. We shall also identify an element~$\lambda\in\Lambda$ with the vector $(x_\lambda,y_\lambda,z_\lambda)\in\L^3$. Then $T_{\lambda}(v)$ is identified with~$v+\lambda$. Thus we obtain the linear embedding $\Lambda\subset\L^3$. 

We endow the vector space~$\L^3$ with the standard inner product given by
$$
(\xi,\eta)=\xi_1\eta_1+\xi_2\eta_2+\xi_3\eta_3, \qquad\xi=(\xi_1,\xi_2,\xi_3),\,\eta=(\eta_1,\eta_2,\eta_3).
$$ 
We put, 
$$g_{11}=(\alpha,\alpha),\  g_{12}=(\alpha,\beta),\  g_{22}=(\beta,\beta)\in \L.$$
Let $u$ and $v$ be two vertices of $K$. We put $\ell_{uv}=(v-u,v-u)\in\L$.

The homomorphism~$\tau_\theta$  takes $g_{11}$, $g_{12}$, and  $g_{22}$ to the elements of the Gram matrix of the vectors $a(\theta)$ and~$b(\theta)$, and takes $\ell_{uv}$ to the square of the distance between $\theta(u)$ and $\theta(v)$.

Let $R=R(K,\Lambda)$ be a $\Q$-subalgebra of the field $\L$ generated by all $\ell_{uv}$ such that $[uv]$ is an edge of~$K$.

\section{Main result}

Fix a set of numbers $l=\{l_{uv}\},\ l_{uv}=l_{vu}\in\R$, where $[uv]$ runs over all edges of $K$. Consider all possible $2$-periodic polyhedral surfaces $(K,\theta)$ with the set of the squares of edge lengths equal to $l$.  Let $\mathcal{G}=\mathcal{G}(K,\Lambda,l)$ be the set of all matrices~$G$ that appear as the Gram matrices of the vectors $a(\theta),b(\theta)$ for such $2$-periodic surfaces $(K,\theta)$. (If $l$ cannot be realized as the set of the squares of the edge lengths of a polyhedral surface, then the set $\mathcal{G}(K,\Lambda,l)$ is empty.) The set~$\mathcal{G}$ is contained in the $3$-dimensional affine space~$\R^3$ with coordinates~$g_{11},g_{12},g_{22}$.

The following theorem is a strengthened version of Theorem~\ref{theorem_main} for not necessarily embedded polyhedral surfaces.

\begin{theorem}\label{main_t}
Let $K$ be a simplicial complex homeomorphic to~$\R^2$ with a free simplicial action of the group $\Lambda=\langle\alpha,\beta\rangle\cong\Z^2$. Then for each set of numbers $l=\{l_{uv}\},\ l_{uv}=l_{vu}\in\R$, there is a one-dimensional real affine algebraic subvariety of~$\R^3$ containing $\mathcal{G}(K,\Lambda,l)$.
\end{theorem}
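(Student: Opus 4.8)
The plan is to reduce Theorem~\ref{main_t} to an \emph{integrality} statement in the field $\L$ and then to prove that statement using places. Concretely, I would show that both $g_{12}$ and $g_{22}$ are integral over the subring $B=\Q[R][g_{11}]\subseteq\L$ generated by the squared edge lengths together with $g_{11}$. Granting this, integrality yields monic relations
\[
g_{12}^{N}+\sum_{k=0}^{N-1} b_k\,g_{12}^{k}=0,\qquad g_{22}^{M}+\sum_{k=0}^{M-1} c_k\,g_{22}^{k}=0,
\]
with $b_k,c_k\in\Q[R][g_{11}]$, i.e.\ polynomials in the $\ell_{uv}$ and in $g_{11}$. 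Substituting the explicit expressions for $g_{11},g_{12},g_{22},\ell_{uv}$ in terms of the coordinate variables turns each relation into a polynomial identity in $\L$, so applying the specialization homomorphism $\tau_{\theta}$ of any realization $(K,\theta)$ with squared edge lengths $l$ shows that the resulting Gram matrix satisfies the two equations $F_1(g_{11},g_{12})=0$ and $F_2(g_{11},g_{22})=0$ obtained by replacing $\ell_{uv}$ by $l_{uv}$. Since $F_1$ is monic in $g_{12}$ and $F_2$ is monic in $g_{22}$, their leading coefficients are $1$ and never vanish under specialization; hence over each fixed value of $g_{11}$ there are only finitely many admissible $g_{12}$ and finitely many admissible $g_{22}$. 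Thus $\{F_1=F_2=0\}\subset\R^3$ is a real affine variety of dimension at most~$1$ containing $\mathcal G(K,\Lambda,l)$, as required. (The $\alpha\leftrightarrow\beta$ symmetry makes singling out $g_{11}$ harmless.)

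By the standard place criterion for integral dependence, the integrality of $g_{12}$ and $g_{22}$ over $B$ is equivalent to the following Main claim: \emph{for every place $\varphi\colon\L\to F\cup\{\infty\}$ that is finite on $R$ (i.e.\ $\varphi(\ell_{uv})\neq\infty$ for every edge $[uv]$) and satisfies $\varphi(g_{11})\neq\infty$, one also has $\varphi(g_{12})\neq\infty$ and $\varphi(g_{22})\neq\infty$}. The subtle point, and the reason metric intuition is insufficient, is that finiteness of $\ell_{uv}=(x_v-x_u)^2+(y_v-y_u)^2+(z_v-z_u)^2$ does \emph{not} force the individual coordinate differences to be finite: in the residue field a sum of three squares can stay finite while the summands blow up along \emph{isotropic} directions. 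I would first control this by rescaling all vertex differences by a dominant infinite element and passing to the residue field. Then each edge vector tends to a null vector, and on each triangle the three limiting edge vectors are null and sum to zero, forcing $(w_e,w_f)=0$ on adjacent edges. Since the Witt index of a nondegenerate $3$-dimensional quadratic form is at most~$1$, any two orthogonal null vectors are proportional; hence \emph{all} limiting edge vectors are parallel to one fixed null direction $w$, and the whole rescaled surface collapses onto the line $\langle w\rangle$.

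The hard part will be to upgrade this leading-order picture to the sub-leading data carried by $g_{11},g_{12},g_{22}$, and this is exactly where the hypothesis $K\cong\R^2$ must enter. The collapse onto $\langle w\rangle$ defines at leading order a function on the vertices whose increments along $\alpha$ and $\beta$ capture the leading behavior of the period vectors; but because the form is isotropic on $\langle w\rangle$, the leading terms of $g_{11},g_{12},g_{22}$ all vanish, so finiteness of these quantities is a condition on the \emph{next}-order part of the developing map. Using that $K$ is simply connected, I would argue that this next-order data is governed by a single globally defined potential on $K$ rather than by independent local choices with nontrivial monodromy around the two generators of $\Lambda$; this global coherence is what ties the sub-leading behavior of $\beta$ to that of $\alpha$. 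In practice I expect to build the surface by a disk-exhaustion of $K$ and to propagate finiteness from a base vertex outward, the simple connectivity guaranteeing that no closed loop produces an unbounded discrepancy. Once the potential is shown to be finite relative to the valuation ring of $\varphi$ under the single assumption $\varphi(g_{11})\neq\infty$, finiteness of $\varphi(g_{22})$ and $\varphi(g_{12})$ follows.

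The principal obstacle, then, is precisely the step of showing that isotropic degenerations compatible with bounded edges are globally coherent: one must rule out ``twisted'' blow-ups that are locally admissible but incompatible with planarity, and it is here that the topological hypothesis $K\cong\R^2$ is indispensable. Everything else — the reduction to integrality, the Witt-index computation giving the single null direction, and the passage from monic relations to a one-dimensional variety — I expect to be routine once this coherence is in place.
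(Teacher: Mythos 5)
Your reduction targets a statement that is false, and the paper's own Miura-ori example (Section~\ref{section_examples}) is a counterexample. You want $g_{22}$ (and $g_{12}$) to be integral over $B=\Q[R][g_{11}]$, equivalently, by the place criterion you quote, that every place finite on all $\ell_{uv}$ and on $g_{11}$ is finite on $g_{22}$. Take the Miura-ori complex with a non-right angle $\alpha$ (triangulate each parallelogram by a diagonal; the flex moves the parallelograms rigidly, so all $\ell_{uv}$ are constant along it). Then $\mathcal{G}$ contains a segment of the irreducible conic $g_{11}g_{22}\sin^2\alpha-g_{11}|b_0|^2+|a_0|^2|b_0|^2\cos^2\alpha=0$ in the plane $g_{12}=0$. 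A monic relation $g_{22}^{M}+\sum c_k g_{22}^{k}=0$ with $c_k\in\Q[R][g_{11}]$ would specialize to a polynomial $F_2(g_{11},g_{22})$, still monic in $g_{22}$, vanishing on a segment of that conic, hence on the whole (irreducible) conic, hence divisible by $g_{11}g_{22}\sin^2\alpha-\cdots$; but then the top $g_{22}$-coefficient of $F_2$ is divisible by $g_{11}$ and cannot equal $1$. The same conic shows $g_{11}$ is not integral over $\Q[R][g_{22}]$ either, so the ``$\alpha\leftrightarrow\beta$ symmetry'' remark does not save the plan. This is exactly why the paper's Lemma~\ref{lem_main} is stated the way it is: a place finite on the edge lengths is only guaranteed to be finite on $(\lambda,\lambda)$ and $(\lambda,\mu)$ for \emph{some} basis $\lambda,\mu$ depending on the place --- one cannot prescribe in advance which Gram entries stay finite --- and accordingly Proposition~\ref{propos_main} produces not monic relations but two relations whose leading homogeneous forms are coprime products of $m^2X+2mkY+k^2Z$ and of $AX+BY+CZ$ with $B^2\ne 4AC$; the dimension count runs through that coprimality, not through monicity.

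Independently of the false target, the central claim of your argument is not actually proved: the third and fourth paragraphs are a program, with the decisive step (global coherence of the isotropic degeneration, ``no monodromy'' for the sub-leading potential) explicitly deferred. The local observation --- that on a triangle the rescaled limiting edge vectors are null, pairwise orthogonal, and hence proportional because a nondegenerate ternary form has Witt index at most one --- is sound and is morally the content of the Connelly--Sabitov--Walz Lemma~\ref{lem_CSW} that the paper invokes; but the paper replaces your hoped-for global propagation by a combinatorial double induction (on the number of non-special $\Lambda$-orbits and on the minimal degree of a non-special vertex), using flips legitimized by Lemma~\ref{lem_CSW} to reduce to an explicitly classified base triangulation, and it is in that induction that the hypothesis $K\cong\R^2$ does its work. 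As written, your proposal neither reaches a true intermediate statement nor supplies the argument for the statement it does aim at.
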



To prove this theorem we shall study polynomial relations among the elements $g_{11}, g_{12},g_{22}\in\L=\L(K,\Lambda)$ with coefficients in the ring~$R=R(K,\Lambda)$. Any such polynomial relation has the form $f(g_{11},g_{12},g_{22})=0$, where $f\in R[X,Y,Z]$. The free $R$-algebra~$R[X,Y,Z]$ has a natural $\Z$-grading given by $\deg X=\deg Y=\deg Z=1$. For each $f\in R[X,Y,Z]$, we denote by~$\widehat{f}$ the homogeneous component of~$f$ of the maximal degree.

\begin{propos}\label{propos_main}
Let $K$ be a simplicial complex homeomorphic to~$\R^2$ with a free simplicial action of a group $\Lambda=\langle\alpha,\beta\rangle\cong\Z^2$, $\L=\L(K,\Lambda)$, and $R=R(K,\Lambda)$. Then the elements $g_{11},g_{12},g_{22}\in\L$ satisfy a system of two polynomial equations 
$$\left\{
\begin{aligned}
f(g_{11},g_{12},g_{22})&=0,\\
h(g_{11},g_{12},g_{22})&=0
\end{aligned}
\right.
$$
with coefficients in~$R$ such that $\widehat{f}$ and $\widehat{h}$ have coefficients in $\Q$, and are coprime in~$\Q [X,Y,Z]$. 
\end{propos}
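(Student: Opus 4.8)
The plan is to analyze all places of $\L$ that are finite on the ring $R$ and to read off from them the behaviour ``at infinity'' of the triple $(g_{11},g_{12},g_{22})$. Let $W$ be the closure over $\mathrm{Frac}(R)$ of the point $(g_{11},g_{12},g_{22})\in\mathbb{A}^3$, i.e.\ the variety cut out by the prime $P\subset\mathrm{Frac}(R)[X,Y,Z]$ of all relations. The statement we want is equivalent to two facts: (i) $\dim W\le 1$, so that $P$ has height $\ge 2$; and (ii) the set $D\subset\mathbb{P}^2$ of asymptotic directions of $W$ (the intersection of its projective closure with the plane at infinity) is finite and defined over $\Q$. Indeed, a finite $\Q$-rational subset of $\mathbb{P}^2$ is a set-theoretic intersection of two forms $\widehat f,\widehat h\in\Q[X,Y,Z]$ without common factor; lifting these to genuine relations over $R$ with the prescribed leading forms then yields the required $f,h$. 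Both (i) and (ii) follow once we understand, for a place $\varphi\colon\L\to F\cup\{\infty\}$ finite on $R$, the direction in which $(g_{11},g_{12},g_{22})$ tends to infinity.

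The geometric core is a degeneration lemma. Fix such a $\varphi$ and an associated valuation $\nu$, so that $\nu(\ell_e)\ge 0$ for every edge. I would first fix, once and for all, combinatorial paths in $K$ from $v_1$ to $T_\alpha(v_1)$ and from $v_1$ to $T_\beta(v_1)$; writing $\alpha$ and $\beta$ as the corresponding signed sums of edge vectors $e=v-u$ reduces everything to the edge vectors. The crucial point is that over the non-real residue field $F$ an edge vector can blow up even though $(e,e)=\ell_e$ stays finite, but then its leading part is \emph{isotropic}: dividing $e$ by a scalar with the valuation of its largest coordinate gives $\bar e\in F^3$ with $(\bar e,\bar e)=0$. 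Moreover, if $e,e'$ span a common triangle with third edge $e''$, then $(e,e')=\tfrac12(\ell_e+\ell_{e'}-\ell_{e''})\in R$ is finite, whence $(\bar e,\bar e')=0$. Since the standard form on $F^3$ is nondegenerate in dimension $3$, its Witt index is at most $1$, so two orthogonal isotropic vectors must be proportional. Propagating this through adjacent triangles — and here the hypothesis that $K$ is homeomorphic to $\R^2$, hence simply connected, is essential to exclude holonomy — I expect to conclude that all blown-up edge vectors are, to leading order, proportional to a single isotropic line $\langle\bar w\rangle\subset F^3$. Consequently $\alpha$ and $\beta$ collapse onto this line: $\bar\alpha=\lambda_\alpha\bar w$ and $\bar\beta=\lambda_\beta\bar w$.

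Because $\bar\alpha$ and $\bar\beta$ are proportional, the Gram matrix degenerates to rank at most $1$ at infinity, which forces every asymptotic direction to lie on the conic $\{XZ=Y^2\}$; this is the first, manifestly $\Q$-rational, leading form. To cut $D$ down to a finite set and produce a second, coprime form, I would compute the next order: the coefficients $\lambda_\alpha,\lambda_\beta$ are $\Q$-linear combinations of the residues $\bar e/\bar w$ fixed by the chosen paths, and the genuine growth rates of $g_{11},g_{12},g_{22}$ are controlled by the first non-vanishing cross terms. These constraints should confine the ratio $[\lambda_\alpha:\lambda_\beta]$, hence the direction $[\lambda_\alpha^2:\lambda_\alpha\lambda_\beta:\lambda_\beta^2]$ on the conic, to finitely many $\Q$-rational values, giving (ii); combined with the fact that $D$ is exactly the set of directions of the asymptotic cone, this also gives (i). Finally, finiteness and $\Q$-rationality of $D$ provide coprime forms $\widehat f,\widehat h\in\Q[X,Y,Z]$, and a bookkeeping argument — clearing the denominators of the lower-degree terms of the corresponding elements of $P$ without disturbing the already rational top-degree part — upgrades them to honest relations $f,h\in R[X,Y,Z]$ with $f(g_{11},g_{12},g_{22})=h(g_{11},g_{12},g_{22})=0$.

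The main obstacle is the degeneration analysis together with the higher-order computation. Since every leading part is isotropic, the naive leading Gram matrix vanishes identically, so the true asymptotic behaviour of $g_{11},g_{12},g_{22}$ surfaces only at a lower order and must be extracted by a careful valuation-theoretic expansion; at the same time one must handle edges blowing up at different rates and verify that a single isotropic direction really does propagate across the whole surface — precisely the step where simple-connectivity enters. I expect the reduction to places and the final lifting to be routine, but this global isotropic rigidity, and the extraction from it of a second leading form coprime to $XZ-Y^2$, to be the heart of the matter.
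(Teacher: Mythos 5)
There is a genuine gap --- in fact the proposal defers to ``expectation'' precisely the two steps where all of the difficulty of this proposition lives. First, the degeneration lemma is not established, and the argument sketched for it does not survive the complication you yourself flag: the orthogonality relation $(\bar e,\bar e')=0$ coming from $(e,e')=\tfrac12(\ell_e+\ell_{e'}-\ell_{e''})$ only forces proportionality of two isotropic leading parts when the two edges blow up at the \emph{same} rate; if $e'$ blows up more slowly (or not at all), its normalized limit is $0$ and the Witt-index argument gives nothing, so the ``single isotropic line'' does not propagate. Second, even if it did, your inference ``$\bar\alpha,\bar\beta$ proportional $\Rightarrow$ rank $\le 1$ at infinity $\Rightarrow$ direction on $XZ=Y^2$'' is broken for the reason you note yourself: all leading Gram entries then vanish identically, so the asymptotic direction of $(g_{11},g_{12},g_{22})$ is governed entirely by sub-leading terms, about which nothing has been proved; and the second, coprime form is pure hope (``these constraints should confine\dots''). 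Finally, the reduction itself is shaky: knowing that the asymptotic directions of $W$ over $\mathrm{Frac}(R)$ form a finite $\Q$-rational set does not by itself produce relations \emph{with coefficients in $R$} having prescribed $\Q$-rational leading forms; that lifting is real work, not bookkeeping.

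For comparison, the paper routes around all of this. Its key Lemma~\ref{lem_main} is a sharper, purely combinatorial statement: for \emph{every} place $\varphi$ finite on all $\ell_{uv}$ there is a \emph{basis} $\lambda,\mu$ of $\Lambda$ with both $(\lambda,\lambda)$ and $(\lambda,\mu)$ finite. This is proved by induction on the number of non-special vertex orbits, using the decomposition of the torus $K/\Lambda$ into cylinders bounded by simple closed curves of a common primitive homology class, together with Connelly--Sabitov--Walz flips to reduce vertex degrees; simple connectivity of $K$ enters there, not through any holonomy of isotropic directions. The two relations are then extracted algebraically: one localizes $R$ at all $(\lambda,\lambda)$ (respectively all $(\lambda,\mu)$ with $\lambda\ne\pm\mu$ primitive) and shows via the lemma that the ideal generated by the inverses is the unit ideal; writing $1$ as a finite combination and clearing denominators yields $f$ and $h$ whose leading forms are products of linear forms $m^2X+2mkY+k^2Z$ (discriminant zero) and $AX+BY+CZ$ with $B^2\ne 4AC$ respectively, which are visibly coprime. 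If you want to salvage your plan, you would essentially have to prove Lemma~\ref{lem_main} anyway; I recommend starting from the cylinder decomposition of $K/\Lambda$ rather than from local isotropy propagation.
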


\begin{proof}
Let us formulate a key lemma.

\begin{lem}\label{lem_main}
Let $K$ be a simplicial complex homeomorphic to~$\R^2$ with a free simplicial action of a group $\Lambda\cong\Z^2$, and let $\L=\L(K,\Lambda)$. Let $\varphi:\L\to\F\cup\{\infty\}$ be a place such that $\mathop{\mathrm{char}}\nolimits\F\ne 2$ and $\varphi(\ell_{uv})\ne\infty$ for all edges~$[uv]$ of~$K$. Then there exists a basis $\lambda,\mu$ of~$\Lambda$ such that $\varphi$ is finite on the inner products~$(\lambda,\lambda)$ and~$(\lambda,\mu)$.
\end{lem}

This lemma will be proved in the next section, and now we shall use it to prove Proposition~\ref{propos_main}. 

Recall that an element $\lambda\in\Lambda$ is called \textit{primitive\/} if it does not have the form $q\mu$ for an integer $q>1$ and $\mu\in\Lambda$. An element $m\alpha+k\beta$ is primitive if and only if $m$ and~$k$ are coprime.

Let us construct two $\Q$-subalgebras $R_1$ and $R_2$ of~$\L$. The algebra $R_1$ is obtained by adjoining to $R$ the inverted inner squares of all primitive elements of $\Lambda$:
$$
R_1=R\left[\left.\frac{1}{(\lambda, \lambda)}\,\right| \lambda\in\Lambda \text{ is a primitive element}\right].
$$  
The algebra $R_2$ is obtained by adjoining to $R$ the inverted inner products of all pairs of non-proportional primitive elements of $\Lambda$:
$$
R_2=R\left[\left.\frac{1}{(\lambda, \mu)}\,\right| \lambda\neq \pm\mu \text{ are primitive elements of } \Lambda \right].
$$
Let us consider the ideal $$I_1=\left(\left.\frac{1}{(\lambda,\lambda)}\,\right| \lambda\in\Lambda \text{ is a primitive element}\right)\triangleleft R_1.$$ There are two cases, $I_1=R_1$ and $I_1\neq R_1$.

If $I_1\neq R_1$, then there exists a maximal ideal $\mathfrak{m}_1\supset I_1$. Consider the field $F=R_1/\mathfrak{m}_1$. Since $R_1$ contains~$\Q$, we have $\mathop{\mathrm{char}}\F=0$. The quotient homomorphism $\varphi\colon R_1\rightarrow F$ can be extended to a place $\varphi\colon \L\rightarrow \overline{F}\cup \{\infty\}$, where $ \overline{F}$ is the algebraic closure of~$\F$ (see~\cite[Ch.~I, Thm. 1]{Lan72}). The place $\varphi$ is finite on $R_1$ and vanishes on~$\mathfrak{m}_1$. Hence $\varphi$ is finite on $R$ and infinite on $(\lambda,\lambda)$ for all primitive elements $\lambda\in \Lambda$. This contradicts Lemma \ref{lem_main}. Consequently, this is not the case.

If $I_1=R_1$, then $1\in I_1$. Hence 
$$
1=\sum_{i=1}^{p}\frac{r_i}{(\lambda_{i1},\lambda_{i1})(\lambda_{i2},\lambda_{i2})\dots(\lambda_{iq_i},\lambda_{iq_i})}\ ,
$$ 
where $r_i\in R$, $q_i\geq 1$, and $\lambda_{ij}$ are primitive elements of $\Lambda$. Multiplying both sides by the product of all denominators, and putting $\lambda_{ij}=m_{ij} \alpha+k_{ij} \beta$, $m_{ij},k_{ij}\in \mathbb{Z},$ we obtain an algebraic equation on~$g_{ij}$:

$$
\prod_{i=1}^{p}\prod_{j=1}^{q_i}(m_{ij}^2g_{11}+2m_{ij}k_{ij}g_{12}+k_{ij}^2g_{22})=r_1\prod_{i=2}^{p}\prod_{j=1}^{q_i}(m_{ij}^2g_{11}+2m_{ij}k_{ij}g_{12}+k_{ij}^2g_{22})+\ldots
$$

Taking all summands to the left-hand side, we obtain the relation of the form $f(g_{11},g_{12},g_{22})=0$. Since $q_i\ge 1$ for all~$i$, we see that 
$$
\widehat{f}(X,Y,Z)=\prod_{i=1}^{p}\prod_{j=1}^{q_i}(m_{ij}^2X+2m_{ij}k_{ij}Y+k_{ij}^2Z)=\prod_{i=1}^q(m_{i}^2X+2m_{i}k_{i}Y+k_{i}^2Z), 
$$
where $q=q_1+\cdots+q_p$.
(We have renumerated the pairs $(m_{ij},k_{ij})$ by a single index~$i$.)

Similarly, consider the algebra~$R_2$ and the ideal $$I_2=\left(\left.\frac{1}{(\lambda,\mu)}\,\right| \lambda,\mu\in \Lambda \text{ are primitive elements}, \lambda\neq \pm\mu\right).$$ There are two cases, $I_2=R_2$ and $I_2\neq R_2$. Again, $I_2\neq R_2$ contradicts Lemma~\ref{lem_main}. So we have $1\in I_2$. Then 
$$
1=\sum_{i=1}^{l} \frac{s_i}{(\lambda_{i1},\mu_{i1})(\lambda_{i2},\mu_{i2})\ldots (\lambda_{it_i},\mu_{it_i})},
$$
where $s_i\in R$, $t_i\geq 1$, and $\lambda_{ij}\neq\pm\mu_{ij}$ are primitive elements of $\Lambda$. Since $(\lambda_{ij},\mu_{ij})=A_{ij}g_{11}+B_{ij}g_{12}+C_{ij}g_{22}$, $A_{ij}, B_{ij}, C_{ij}\in\Z$, $B_{ij}^2\neq 4A_{ij}C_{ij}$, we obtain an equation $h(g_{11},g_{12},g_{22})=0$ such that

$$
\widehat{h}(X,Y,Z)=\prod_{j=1}^t (A_jX+B_jY+C_jZ), \ B_j^2\neq 4A_jC_j.
$$

It is clear that $\widehat{f}$ and $\widehat{h}$ are coprime. 
\end{proof}

\begin{proof}[Proof of Theorem \ref{main_t}]

If the set of numbers $l=\{l_{uv}\}$ cannot be realized as the set of the squares of the edge lengths of a polyhedral surface $(K,\theta)$, then the set $\mathcal{G}(K,\Lambda,l)$ is empty, hence, the assertion of the theorem is trivial. 

Now, suppose that $l$ is the set of the squares of the edge lengths of  $(K,\theta)$. Let $\eta \colon R\rightarrow \R$ be the restriction of the specialization homomorphism $\tau_{\theta}$ to $R$. Since $\eta(\ell_{uv})=l_{uv},$ the homomorphism $\eta$ is independent of the choice of $\theta$. Let $f,h\in R[X,Y,Z]$ be the polynomials in Proposition~\ref{propos_main}, and let $\overline{f}, \overline{h}\in \R[X,Y,Z]$ be their images under $\eta$. Since $\widehat{f}, \widehat{h}\in\Q[X,Y,Z]$ and $\eta|_\Q$ is the identity homomorphism,  the leading terms of $\overline{f}$ and $\overline{h}$ are again $\widehat{f}$ and $\widehat{h}$ respectively. But  $\widehat{f}$ and $\widehat{h}$ are coprime. Therefore, $\overline{f}$ and $\overline{h}$ are coprime. Hence the set $\mathcal{G}(K,\Lambda,l)$ is contained in the one-dimensional variety  
$$\left\{
\begin{aligned}
\overline{f}(X,Y,Z)&=0,\\
\overline{h}(X,Y,Z)&=0.
\end{aligned}
\right.
$$ 
\end{proof}

\section{Proof of Lemma~\ref{lem_main}}\label{section_proof_lemma}

Let $V$ be the vertex set of the simplicial complex~$K$. A vertex $v\in V$ will be called \textit{special\/} if it is connected by an edge of~$K$ with another vertex in the orbit~$\Lambda v$. If an edge~$e$ connects vertices~$v$ and~$T_{\lambda}(v)$, $\lambda\in\Lambda$, then  the edge  $T_{\mu}(e)$ connects the vertices~$T_{\mu}(v)$ and $T_{\lambda}(T_{\mu}(v))$ for every $\mu\in\Lambda$. Hence the property of being special is invariant under the action of~$\Lambda$.
 A $\Lambda$-orbit~$\Lambda v\subset V$ will be called \textit{special\/} if it consists of special vertices.

\begin{lem}\label{lem_primitive}
Suppose that a vertex $v$ of~$K$ is connected by an edge with a vertex~$T_{\lambda}(v)$, $\lambda\in\Lambda$. Then $\lambda$ is primitive.
\end{lem}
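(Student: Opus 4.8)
The plan is to prove Lemma~\ref{lem_primitive} by contradiction, using the fact that $K$ is homeomorphic to the plane $\R^2$ together with the simplicial $\Lambda$-action. Suppose $v$ is joined by an edge $e$ to $T_\lambda(v)$ where $\lambda$ is \emph{not} primitive, so $\lambda=q\mu$ for some primitive $\mu\in\Lambda$ and some integer $q\ge 2$. The idea is that the edge $e$ together with its $\langle\mu\rangle$-translates forms a bi-infinite ``staircase'' of edges inside $K$ that must wind around the plane in an impossible way.

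More precisely, first I would consider the cyclic subgroup $\langle\mu\rangle\subset\Lambda$ generated by the primitive element~$\mu$, and look at the images $e_k=T_{k\mu}(e)$ for $k\in\Z$. The edge $e_k$ joins $T_{k\mu}(v)$ to $T_{(k+q)\mu}(v)$. Since $q\ge 2$, these edges overlap in the sense that their endpoints interleave along the orbit $\{T_{k\mu}(v)\}_{k\in\Z}$: the edge $e_0$ connects the $0$-th and $q$-th points of this orbit, the edge $e_1$ connects the $1$-st and $(q+1)$-th points, and so on. I would concatenate the edges $e_0,e_1,\dots$ (or choose an appropriate sub-collection) to obtain a bi-infinite polygonal arc $\gamma$ in $K$ whose consecutive segments ``skip ahead'' along the orbit. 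The key topological point to extract is that because $q\ge 2$, two of these edges must cross or create a nontrivial loop when projected to the quotient, contradicting either the fact that $K$ embeds in the plane or the freeness of the $\Lambda$-action near the orbit $\Lambda v$.

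The cleanest route I expect to work is to pass to the quotient. Because the action is free and simplicial and $K\cong\R^2$, the quotient $K/\langle\mu\rangle$ is homeomorphic to an open cylinder (infinite annulus), and the image of the edge $e$ there is a loop $\bar e$. In the cylinder the homotopy class of a loop is measured by an integer (winding number around the cylinder). Since $e$ joins $v$ to $T_{q\mu}(v)$, the loop $\bar e$ has winding number exactly $q$ with respect to the core circle of the cylinder. But $\bar e$ is a single embedded edge (its two endpoints map to the same point $\bar v$ in $K/\langle\mu\rangle$, and its interior is embedded), hence its image is a simple closed curve, which on a cylinder must have winding number $0$ or $\pm1$. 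This forces $q\le 1$, contradicting $q\ge 2$. I would need to justify that $\bar e$ is indeed simple: distinct interior points of $e$ lie in distinct $\langle\mu\rangle$-orbits because the $\Lambda$-action is free and $e$ is a single simplex, so $T_{k\mu}(e)\cap e$ for $k\ne 0$ can meet only in endpoints, and a short check using that $v$ and $T_{q\mu}(v)$ are the only orbit points on $e$ rules out self-intersection of the image.

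The main obstacle I anticipate is making the winding-number/simple-closed-curve argument fully rigorous at the level of the simplicial complex rather than waving at intuition: precisely, controlling how the translates $T_{k\mu}(e)$ intersect $e$, and correctly identifying $K/\langle\mu\rangle$ as a cylinder together with the statement that a simple closed curve on a cylinder is null-homotopic or generates $\pi_1$. An alternative, possibly more elementary, approach avoiding the cylinder would be a direct planar argument: lift to $K\cong\R^2$, note that the edge $e$ and its translate $T_\mu(e)$ share no common vertex when $q\ge 2$, and show that the broken line through $v, T_\mu(v),T_{2\mu}(v),\dots$ and the edges $e_k$ force a forbidden crossing by a Jordan-curve type argument. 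Either way the essential content is that a primitive step length $q\ge2$ is incompatible with planarity of an embedded edge, and I would organize whichever version is shorter, likely the quotient-cylinder argument, into the final write-up.
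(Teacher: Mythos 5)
Your proof is correct and is essentially the paper's argument: both pass to a quotient of $K$, observe that the image of the edge $e$ is a simple closed curve because only its two endpoints lie in a common orbit, and conclude primitivity from the classification of simple closed curves on the quotient surface. The only difference is cosmetic --- the paper quotients directly by all of $\Lambda$ to get a torus and uses that a nonzero homology class of a simple closed curve on the torus is primitive, whereas you quotient by $\langle\mu\rangle$ to get a cylinder and run the same idea as a winding-number contradiction.
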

\begin{proof}
The quotient surface $K/\Lambda$ is homeomorphic to the torus. We have the natural isomorphism $H_1(K/\Lambda,\Z)\cong\Lambda$. We shall identify these two groups by this natural isomorphism. Let $e$ be an edge connecting~$v$ and~$T_{\lambda}(v)$, and
let $\gamma$ be the image of~$e$ under the quotient mapping $K\to K/\Lambda$.  The endpoints of~$e$ lie in the same $\Lambda$-orbit, and no other two points of~$e$ lie in the same $\Lambda$-orbit. Hence $\gamma$ is a simple closed curve of homology class~$\lambda$. Therefore, $\lambda$ is primitive.
\end{proof}

\begin{lem}\label{lem_special}
Suppose that $K$ contains at least one special vertex. Then there exists a primitive element $\lambda\in\Lambda$ such that every special vertex $v$ of~$K$ is connected by an edge with~$T_{\lambda}(v)$.
\end{lem}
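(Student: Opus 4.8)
The plan is to reduce the statement to a computation of homological intersection numbers on the torus $K/\Lambda$, exploiting the dichotomy between special vertices lying in a single $\Lambda$-orbit or in several. Throughout I identify $H_1(K/\Lambda;\Z)$ with $\Lambda$ as in the proof of Lemma~\ref{lem_primitive}, and I use that $K/\Lambda$ is an oriented torus, so that the intersection pairing on $H_1(K/\Lambda;\Z)$ is the nondegenerate skew-symmetric form whose value on classes $\lambda=m\alpha+k\beta$ and $\mu=m'\alpha+k'\beta$ equals $mk'-m'k$ up to a global sign.

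First I would record the basic structure. The property of being special is $\Lambda$-invariant, so the special vertices form a union of $\Lambda$-orbits, and the relevant dichotomy is simply whether they occupy one orbit or at least two. For a special vertex $v$ joined by an edge $e$ to $T_\lambda(v)$, Lemma~\ref{lem_primitive} gives that $\lambda$ is primitive, and the argument there shows that the image $\gamma$ of $e$ in $K/\Lambda$ is a simple closed curve whose homology class is $\lambda$. Note also that $v$ is joined to $T_\lambda(v)$ if and only if it is joined to $T_{-\lambda}(v)$ (apply $T_{-\lambda}$ to the edge), so the set of ``self-edge directions'' of a special orbit is a nonempty, $\pm$-symmetric set of primitive elements, depending only on the orbit by equivariance.

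The key step is the following claim: if $v$ and $w$ are special vertices lying in different $\Lambda$-orbits, with self-edges pointing in primitive directions $\lambda$ and $\mu$ respectively, then $\mu=\pm\lambda$. To prove it I would show the corresponding simple closed curves $\gamma_\lambda,\gamma_\mu\subset K/\Lambda$ are disjoint. Indeed, any common point would come from a point of the edge $e$ and a point of an edge $f$ (joining $w$ to $T_\mu(w)$) lying in the same $\Lambda$-orbit, that is, from some translate $T_\sigma(e)$ meeting $f$ in~$K$. Since $K$ is a simplicial complex, two edges meet only in a common vertex or coincide; either way the orbits $\Lambda v$ and $\Lambda w$ would share a vertex, contradicting that $v,w$ lie in different orbits. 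Disjoint curves have intersection number $0$, so $mk'-m'k=0$; being primitive and proportional, $\mu=\pm\lambda$.

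Finally I would assemble the conclusion from the orbit dichotomy. If all special vertices lie in a single orbit, pick any self-edge direction $\lambda$ of one of them; by equivariance every special vertex is joined to its $\lambda$-translate, and we are done. If the special vertices occupy at least two orbits, the claim forces the self-edge directions of any two orbits to agree up to sign; fixing one such direction $\lambda$, every special orbit has $\lambda$ among its self-edge directions, so again every special vertex is joined to its $\lambda$-translate. The main obstacle is the key step, and in particular realizing that the lemma \emph{permits} several self-edge directions inside one orbit (as for the standard $\Z^2$-triangulation of the plane, where $(1,0)$, $(0,1)$, $(1,1)$ all occur): the substance is therefore not to pin down a unique direction but to show that curves coming from \emph{distinct} orbits cannot cross, which is exactly where disjointness of distinct simplicial edges and the vanishing of their intersection number do the work. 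An alternative, homology-free route replaces the intersection-number computation by a planar separation argument: the union $\bigcup_k T_{k\lambda}(e)$ is a properly embedded line in $K\cong\R^2$, its $\Lambda$-translates form a disjoint family indexed by $\Lambda/\langle\lambda\rangle\cong\Z$ and permuted nontrivially by $T_\mu$ whenever $\mu\neq\pm\lambda$, and a $T_\mu$-invariant edge-path from a different orbit, being disjoint from this family, would have to stay inside a single complementary strip, a contradiction.
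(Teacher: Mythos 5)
Your proof is correct and follows essentially the same route as the paper's: project the self-edges of special vertices in distinct $\Lambda$-orbits to disjoint simple closed curves on the torus $K/\Lambda$, conclude their primitive homology classes agree up to sign, and handle vertices in the same orbit by equivariance. The extra detail you supply (the intersection-number computation and the explanation of why the curves are disjoint) only fleshes out steps the paper states more briefly.
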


\begin{proof}

Let $v$ be a special vertex. Then there exists an element~$\lambda\in\Lambda$ such that the vertices~$v$ and~$T_{\lambda}(v)$ are connected by an edge~$e$. By Lemma~\ref{lem_primitive}, $\lambda$ is primitive.

Let $v'$ be any other special vertex. If $v'=T_{\mu}(v)$ for a $\mu\in\Lambda$, then $v'$ is connected with $T_{\lambda}(v')$ by an edge~$T_{\mu}(e)$. Suppose that $v'\notin\Lambda v$.
Since $v'$ is special, there exists an element~$\lambda'\in\Lambda$ such that the vertices~$v'$ and~$T_{\lambda'}(v')$ are connected by an edge~$e'$. Let $\gamma$ and $\gamma'$ be the images of~$e$ and~$e'$ respectively under the quotient mapping $K\to K/\Lambda$. Then $\gamma$ and $\gamma'$ are simple closed curves of homology classes~$\lambda$ and~$\lambda'$ respectively. Since  $\Lambda v'\ne\Lambda v$, the simple closed curves~$\gamma$ and~$\gamma'$ are disjoint. Hence either $\lambda'=\lambda$ or $\lambda'=-\lambda$. Therefore, the vertices $v'$ and $T_{\lambda}(v')$ are joined either by the edge~$e'$ or by the edge~$T_{\lambda}(e')$.  
\end{proof}

We shall prove Lemma~\ref{lem_main} by induction on the number~$n$ of non-special $\Lambda$-orbits in~$V$.

\textsl{Base of induction.\/} Suppose that $n=0$. Then all vertices $v\in V$ are special. By Lemma~\ref{lem_special}, there exists a primitive element $\lambda\in\Lambda$ such that $v$ and $T_{\lambda}(v)$ are joined by an edge  for every~$v\in V$. We denote this edge by~$e_v$. 

The image of~$e_v$ under the quotient mapping $K\to K/\Lambda$ is a simple closed curve~$\gamma_{\Lambda\! v}$ of the homology class~$\lambda$. (Obviously, this curve depends  only on the $\Lambda$-orbit of~$v$.) For distinct $\Lambda$-orbits~$\Lambda v_1$ and~$\Lambda v_2$, the curves~$\gamma_{\Lambda\! v_1}$ and~$\gamma_{\Lambda\! v_2}$ are disjoint. Then the curves $\gamma_{\Lambda\! v}$ decompose the torus~$K/\Lambda$ into cylinders as shown in Figure~\ref{fig_torus}(a).
Let $\Lambda v_1,\ldots,\Lambda v_q$ be all different $\Lambda$-orbits in~$V$ numerated so that the curves $\gamma_i=\gamma_{\Lambda\! v_i}$ go successively around the torus~$K/\Lambda$. We denote by~$C_i$ the cylinder bounded by the curves~$\gamma_i$ and~$\gamma_{i+1}$. 

The quotient $K/\Lambda$ is an \textit{ideal triangulation\/} of the torus, i.\,e., a triangulation which is not necessarily a simplicial complex. Vertices of~$K/\Lambda$ are in one-to-one correspondence with orbits~$\Lambda v$, $v\in V$. The restriction of the triangulation~$K/\Lambda$ to the cylinder~$C_i$ is a triangulation with exactly two vertices~$\Lambda v_i$ and~$\Lambda v_{i+1}$ whose boundary consists of two edges~$\gamma_{i}$ and~$\gamma_{i+1}$. Standard calculation of the Euler characteristic yields that this triangulation has exactly $2$ faces and $4$ edges.
It is easy to see that such ideal triangulation of the cylinder is unique  up to an isomorphism.  Hence the triangulation~$K/\Lambda$ is isomorphic to the triangulation shown in Figure~\ref{fig_torus}(b).  

\begin{figure}
\unitlength=5mm
\begin{picture}(19,7.5)
\put(3.3,0){\textit{a}}
\put(15.3,0){\textit{b}}

\multiput(0,1.5)(12,0){2}{%
\begin{picture}(0,0)

\thicklines

\put(0,0){\line(1,0){7}}
\put(0,6){\line(1,0){7}}
\put(0,0){\vector(1,0){3.8}}
\put(0,6){\vector(1,0){3.8}}
\put(0,0){\line(0,1){6}}
\put(7,0){\line(0,1){6}}
\put(0,0){\vector(0,1){3.48}}
\put(7,0){\vector(0,1){3.48}}
\put(0,0){\vector(0,1){3.12}}
\put(7,0){\vector(0,1){3.12}}

\thinlines

\multiput(1.7,0)(1.2,0){4}{\line(0,1){6}}
\multiput(1.7,3)(1.2,0){4}{\circle*{.15}}

\end{picture}%
}

\put(0,1.5){%
\begin{picture}(0,0)
\multiput(.45,2.95)(5.45,0){2}{$\ldots$}
\footnotesize
\put(1.75,2.55){$\Lambda\hspace{-.2mm}v\hspace{-.2mm}_q$}
\put(2.95,2.55){$\Lambda\hspace{-.2mm}v\hspace{-.2mm}_1$}
\put(4.15,2.55){$\Lambda\hspace{-.2mm}v\hspace{-.2mm}_2$}
\put(5.35,2.55){$\Lambda\hspace{-.2mm}v\hspace{-.2mm}_3$}

\put(1.75,4.5){$\gamma\hspace{-.2mm}_q$}
\put(2.95,4.5){$\gamma\hspace{-.2mm}_1$}
\put(4.15,4.5){$\gamma\hspace{-.2mm}_2$}
\put(5.35,4.5){$\gamma\hspace{-.2mm}_3$}

\put(2,1){$C\hspace{-.2mm}_q$}
\put(3.2,1){$C\hspace{-.2mm}_1$}
\put(4.4,1){$C\hspace{-.2mm}_2$}

\end{picture}%
}

\put(12,1.5){%
\begin{picture}(0,0)
\multiput(.45,2.95)(5.3,0){2}{$\ldots$}
\put(1.7,3){\line(1,0){3.6}}
\multiput(1.7,3)(1.2,0){3}{\line(1,-5){.6}}
\multiput(2.3,6)(1.2,0){3}{\line(1,-5){.6}}
\end{picture}%
}

\end{picture}
\caption{The torus~$K/\Lambda$: (a) decomposition into cylinders, (b) triangulation}\label{fig_torus}
\bigskip
\bigskip

\input{fig_K.tex}
\caption{The triangulation~$K$}\label{fig_K}
\end{figure}

Therefore, the triangulation~$K$ is isomorphic to the simplicial complex shown in Figure~\ref{fig_K}.
The pre-image under the quotient mapping $K\to K/\Lambda$ of every curve~$\gamma_i$ is a union of countably many lines $L_{i+qj}$, $j\in\Z$, each homeomorphic to~$\R$.  The pre-image of every cylinder~$C_i$ is a union of countably many strips~$S_{i+qj}$,  $j\in\Z$, each homeomorphic to~$\R\times I$. The vertices $v_i\in L_i$ may be chosen so that the vertices~$v_i$, $v_i'=T_{\lambda}(v_i)$, and~$v_{i+1}$ span a triangle in~$S_i$ for every~$i$. The line~$L_i$ consists of  the vertices~$T_{\lambda}^k (v_i)$ and the edges~$T_{\lambda}^k (e_{v_i})$, $k\in\Z$. 

The orbit $\Lambda v_i$ is the union of the vertex sets of the lines~$L_{i+qj}$, $j\in\Z$. Let $v_q=T_{\mu}(v_0)$. Then $v_{i+q}=T_{\mu}(v_{i})$ for every $i$. It easily follows that any vertex in the orbit~$\Lambda v_i$ has the form $T_{\lambda}^kT_{\mu}^m(v_i)=T_{k\lambda+m\mu}(v_i)$. Therefore, $\lambda,\mu$ is a basis of~$\Lambda$.
We have  $(\lambda,\lambda)=\ell_{v_0v_0'}$ and
\begin{equation*}
(\lambda,\mu)=(\lambda,v_q-v_0)=\sum_{i=0}^{q-1}(v_i'-v_i^{\vphantom{\prime}},v_{i+1}^{\vphantom{\prime}}-v_i^{\vphantom{\prime}})=\frac{1}{2}\sum_{i=0}^{q-1}(\ell_{v_i^{\vphantom{\prime}}v_i'}+\ell_{v_i^{\vphantom{\prime}}v_{i+1}^{\vphantom{\prime}}}-\ell_{v_i'v_{i+1}^{\vphantom{\prime}}}).
\end{equation*}
Since the place~$\varphi$ is finite on $\ell_{vw}$ for all edges~$[vw]$ of~$K$, and $\mathop{\mathrm{char}}\nolimits\F\ne 2$, we conclude that $\varphi$ is finite on~$(\lambda,\lambda)$ and~$(\lambda,\mu)$.

\textsl{Step of induction.\/} Let $n$ be the number of non-special $\Lambda$-orbits in~$V$. Let $d$ be the smallest degree of a non-special vertex in~$V$. (The \textit{degree\/} of a vertex is the number of edges incident to it.) We assume that we have already proved the assertion of Lemma~\ref{lem_main} for all~$K$ with strictly smaller~$n$, or with the same~$n$ and strictly smaller~$d$.

We shall consider two cases.

1. Suppose that $K$ contains an ``empty triangle'', that is, three vertices $u$, $v$, and~$w$ such that $[uv]$, $[vw]$, and $[wu]$ are edges of~$K$, but $[uvw]$ is not a triangle of~$K$. Since $K$ is homeomorphic to a plane, we see that the curve~$\gamma$ consisting of the edges $[uv]$, $[vw]$, and $[wu]$ decomposes~$K$ into two parts, one of which is homeomorphic to a disk. We denote the closure of this part by~$D$. Then~$D$ is a finite subcomplex of~$K$. 

\begin{lem}\label{lem_interiorsD}
The interiors of the disks~$D_{\lambda}=T_{\lambda}(D)$, $\lambda\in\Lambda$, are disjoint. 
\end{lem}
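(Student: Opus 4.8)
The plan is to reduce the statement to the single assertion that $\Int D\cap\Int D_\lambda=\emptyset$ for every $\lambda\neq 0$; this reformulation is what makes the combinatorics manageable. The family $\{D_\lambda\}$ is $\Lambda$-invariant, since $T_\mu D_\lambda=D_{\lambda+\mu}$, so after applying a suitable translation any two overlapping interiors $\Int D_\lambda$, $\Int D_\mu$ become $\Int D$ and $\Int D_{\lambda-\mu}$. (Equivalently, one is asking that the covering projection $K\to K/\Lambda$ be injective on $\Int D$.) So suppose, for contradiction, that $\Int D\cap\Int D_\lambda\neq\emptyset$ for some $\lambda\neq 0$. Write $\gamma=\partial D$ for the boundary triangle $uvw$ and $\gamma_\lambda=T_\lambda(\gamma)=\partial D_\lambda$. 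Both are simple closed curves in $K\cong\R^2$, and, being $1$-dimensional subcomplexes of $K$, they can meet only along common vertices and common edges (two simplices of a simplicial complex meet in a common face), never transversally along edge interiors.

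First I would dispose of the case $\gamma\cap\gamma_\lambda=\emptyset$. By the Jordan curve theorem, since the two open disks intersect while their boundaries are disjoint, one of $D$, $D_\lambda$ is strictly nested in the other as a planar region, say $D_\lambda\subsetneq D$. But $T_\lambda$ is a simplicial automorphism of $K$, so $D$ and $D_\lambda$ contain the same (finite) number of triangles; strict containment is therefore impossible, and the only alternative, $D=D_\lambda$, would make $T_\lambda$ permute the finite vertex set of $D$, so that $T_{k\lambda}$ fixes a vertex for some $k\geq 1$, contradicting freeness (as $k\lambda=0$ forces $\lambda=0$). I will reuse this ``nesting is impossible'' template below.

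The hard part will be the case $\gamma\cap\gamma_\lambda\neq\emptyset$, and here Lemma~\ref{lem_primitive} is the decisive tool. If the two triangles share an edge $[pq]$, then $[pq]$ and $T_{-\lambda}[pq]$ are two distinct edges of $\gamma$ lying in a single $\Lambda$-orbit; as any two edges of $\gamma$ share a vertex, a short computation places $u,v,w$ in one orbit with the third edge joining a vertex to its translate by $\pm 2\lambda$, which contradicts Lemma~\ref{lem_primitive} since $2\lambda$ is not primitive. If instead $\gamma$ and $\gamma_\lambda$ share only isolated vertices, each shared vertex exhibits two vertices of $\gamma$ in one orbit joined by an edge; bookkeeping over the three vertices of $\gamma$, using injectivity of $T_{-\lambda}$ and freeness, shows that two or three shared vertices again produce a non-primitive orbit-edge, so there is exactly one shared vertex. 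Two simple closed curves cannot cross at a single point, so $\gamma$ and $\gamma_\lambda$ are tangent there; then $\gamma$ minus that point lies wholly inside or wholly outside $D_\lambda$, which once more forces nesting and hence a contradiction. Thus every case is excluded, proving the lemma. The genuine obstacle throughout is controlling how the two boundary triangles can intersect, and Lemma~\ref{lem_primitive}---forbidding an edge between a vertex and a non-primitive translate of itself---is exactly what rules out all the overlapping configurations.
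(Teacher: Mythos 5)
Your proof is correct and follows essentially the same strategy as the paper's: nesting is ruled out by the finiteness of~$D$ together with the freeness of the $\Lambda$-action, and boundary intersections are ruled out because they force an edge between a vertex and its translate by~$\pm2\lambda$, contradicting Lemma~\ref{lem_primitive}. The paper streamlines your case analysis by observing that overlapping interiors with neither disk contained in the other force $\partial D\cap\partial D_{\lambda}$ to contain at least two points, hence at least two common vertices, which bypasses your shared-edge and single-tangency cases.
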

\begin{proof}
Obviously, it is sufficient to prove that the interiors of the disks~$D$ and~$D_{\lambda}$ are disjoint whenever $\lambda\ne 0$. If $D\supset D_{\lambda}$, then $D\supset D_{k\lambda}$ for all $k>0$. Hence, $D$ contains infinitely many distinct vertices~$T_{k\lambda}(u)$, which is impossible, since $D$ is a finite simplicial complex. Therefore, $D\not\supset D_{\lambda}$. Similarly, $D\not\subset D_{\lambda}$.  Suppose, $\Int(D)\cap\Int(D_{\lambda})\ne\emptyset$; then the intersection $\partial D\cap\partial D_{\lambda}$ must contain at least two distinct points. But this intersection is a subcomplex of~$K$. Hence it contains at least two distinct vertices of~$K$. This means that two of the three vertices $T_{\lambda}(u)$, $T_{\lambda}(v)$, and $T_{\lambda}(w)$ coincide with two of the three vertices~$u$, $v$, and~$w$. Since the action of~$\Lambda$ on~$K$ is free, we obtain that $T_{\lambda}(s)\ne s$ for all vertices~$s$, and, if~$T_{\lambda}(s)=t$, then $T_{\lambda}(t)\ne s$. It is not hard to check that up to a  permutation of~$u$, $v$, and~$w$, there is a unique possibility $T_{\lambda}(u)=v$ and $T_{\lambda}(v)=w$. Then the vertices $u$ and $w=T_{2\lambda}(u)$ are connected by an edge in~$K$. But this is impossible by Lemma~\ref{lem_primitive}. Thus, the interiors of~$D$ and~$D_{\lambda}$ are disjoint.
\end{proof}

Since $[uvw]$ is not a triangle of~$K$, we see that the interior of~$D$ contains at least one vertex~$p$. Lemma~\ref{lem_interiorsD} implies that the vertex~$p$ is non-special. Consider the simplicial complex~$K_1$ obtained from~$K$ by replacing every subcomplex~$T_{\lambda}(D)$ by the triangle~$[T_{\lambda}(u)T_{\lambda}(v)T_{\lambda}(w)]$. It is easy to see that $K_1$ is a well-defined simplicial complex that is homeomorphic to a plane and periodic with the same period lattice~$\Lambda$.  Let $V_1$ be the vertex set of~$K_1$. The number of non-special $\Lambda$-orbits in~$V_1$ is strictly less than the number of non-special $\Lambda$-orbits in~$V$, since at least one non-special orbit $\Lambda p$ has been deleted. By the inductive assumption, the assertion of Lemma~\ref{lem_main} holds for $K_1$.  Since all edges of~$K_1$ are edges of~$K$, this immediately implies the assertion of Lemma~\ref{lem_main} for~$K$.

2. Suppose that $K$ contains no ``empty triangles''. We shall use the following lemma due to Connelly, Sabitov, and Walz~\cite{CSW97}. 

\begin{lem}\label{lem_CSW}
Let $u$ be a vertex of a $2$-dimensional simplicial manifold~$K,$ and let $v_1,\ldots,v_d,$ $d\ge 4,$ be all vertices  joined by edges with~$u,$ numbered in succession around~$u$. Let $\varphi$ be a place that is defined on the field $\K=\Q(x_u,y_u,z_u,x_{v_1},y_{v_1},z_{v_1},\ldots,x_{v_d},y_{v_d},z_{v_d})$ and is finite on all squares of the edge lengths~$\ell_{uv_i}$, $i=1,\ldots,d$, $\ell_{v_iv_{i+1}}$, $i=1,\ldots,d-1$, and $\ell_{v_dv_1}$. Then $\varphi$ is finite on at least one of the squares of the diagonal lengths $\ell_{v_iv_{i+2}},$ $i=1,\ldots,d-2$.
\end{lem}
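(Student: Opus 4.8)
The plan is to pass from the squared edge lengths to the inner products of the ``spoke vectors'' and then to exploit the fact that these vectors live in a $3$-dimensional space. First I would set $w_i=v_i-u\in\K^3$, so that $\ell_{uv_i}=(w_i,w_i)$ and, by the law of cosines,
\[
(w_i,w_{i+1})=\tfrac12\bigl(\ell_{uv_i}+\ell_{uv_{i+1}}-\ell_{v_iv_{i+1}}\bigr),\qquad
(w_i,w_{i+2})=\tfrac12\bigl(\ell_{uv_i}+\ell_{uv_{i+2}}-\ell_{v_iv_{i+2}}\bigr).
\]
Since $\varphi$ restricts to the identity on $\Q$ and $\ch\F\ne2$ (the only case in which Lemma~\ref{lem_CSW} is applied, through Lemma~\ref{lem_main}), the scalar $\tfrac12$ is $\varphi$-finite; hence the hypotheses say exactly that $\varphi$ is finite on all $(w_i,w_i)$ and on all $(w_i,w_{i+1})$, and the conclusion to be proved is that $\varphi$ is finite on $(w_i,w_{i+2})$ for at least one $i$. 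Writing $\nu$ for the valuation of the valuation ring $\varphi^{-1}(\F)$ (so ``finite'' means $\nu\ge 0$), this becomes the statement that the diagonal inner products cannot all have $\nu<0$.

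The source of all relations is that $w_1,\dots,w_d$ lie in the $3$-dimensional space $\K^3$, so the Gram matrix $G=\bigl((w_i,w_j)\bigr)$ has rank at most $3$ and every $4\times4$ minor of $G$ vanishes. These vanishing minors are polynomial identities relating the unknown diagonal entries to the ``band'' of entries $(w_i,w_i)$ and $(w_i,w_{i+1})$ that we already control. I would argue by contradiction: assuming $\nu\bigl((w_i,w_{i+2})\bigr)<0$ for every $i$, I would locate in a suitable vanishing minor a single monomial whose valuation is strictly smaller than that of all the others, so that the minor cannot in fact vanish.

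This is transparent in the base case $d=4$, which I would treat first. Here there are just two diagonals $r_1=(w_1,w_3)$ and $r_2=(w_2,w_4)$, and the single relation $\det G=0$ reads
\[
\det G=r_1^2r_2^2-a_2a_4\,r_1^2-a_1a_3\,r_2^2+(\text{terms of degree}\le 1\text{ in each of }r_1,r_2)=0,
\]
where $a_i=(w_i,w_i)$ and the remaining coefficients lie in the $\varphi$-finite band. If $\nu(r_1),\nu(r_2)<0$, then $r_1^2r_2^2$ is the unique monomial containing all four factors $r_j$: every other monomial trades at least one $r_j$, for which $\nu(r_j)<0$, for a $\varphi$-finite factor, hence has strictly larger valuation. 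Thus $\nu(\det G)=2\nu(r_1)+2\nu(r_2)$, so $\det G\ne 0$, a contradiction, and some diagonal is finite when $d=4$.

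For general $d$ I would proceed by induction on the degree, reducing a wheel of degree $d$ to one of degree $d-1$ by using the linear dependence of four consecutive spoke vectors to eliminate one of them, while checking that the spoke and rim inner products of the smaller wheel stay $\varphi$-finite. The main obstacle is precisely that, for $d\ge5$, the $4\times4$ minors also involve the longer-range entries $(w_i,w_{i+3}),\dots$, which are a priori infinite and uncontrolled, so the clean ``unique dominant monomial'' phenomenon of the base case can break down. To control them I would pass to the residue field $\F$: a spoke with $\nu(w_i)<0$ reduces, after rescaling to valuation $0$, to a nonzero vector $\bar w_i\in\F^3$ that is isotropic for the form $X^2+Y^2+Z^2$ (because $\nu\bigl((w_i,w_i)\bigr)\ge0$ forces the reduced squared length to vanish), and finiteness of $(w_i,w_{i+1})$ forces the reductions of two consecutive large spokes to be orthogonal. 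Since a $3$-dimensional quadratic space has Witt index at most $1$, two orthogonal isotropic vectors must be proportional; hence along any run of consecutive large spokes all the reductions are proportional to a single isotropic direction. This rigidity is what prevents the blow-ups of the diagonals from being consistent with the $\det$-type relations, and turning it into the required contradiction uniformly in $d$ is the part I expect to be the most delicate.
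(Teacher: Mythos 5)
This lemma is one the paper does not prove at all: it is imported verbatim from Connelly--Sabitov--Walz~\cite{CSW97}, so there is no in-paper argument to compare against. Judged on its own terms, your attempt is a genuine from-scratch proof, and its base case is sound: passing to the spoke vectors $w_i=v_i-u$, translating finiteness of the $\ell$'s into finiteness of the band of Gram entries (using $\ch\F\ne 2$ to invert $2$), and for $d=4$ expanding $\det G=0$ to exhibit $r_1^2r_2^2$ as the unique monomial of strictly minimal valuation when $\nu(r_1),\nu(r_2)<0$ is exactly the right computation, and the ultrametric inequality (valid for valuations of arbitrary rank) then gives $\det G\ne 0$, a contradiction.

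The problem is that the lemma is asserted for all $d\ge 4$, and for $d\ge 5$ you do not have a proof --- you say so yourself. The two strategies you float both have concrete obstructions you have not removed. The induction on $d$ fails naively because eliminating $v_{i+1}$ promotes the diagonal $\ell_{v_iv_{i+2}}$, which under the contradiction hypothesis has $\nu<0$, to a rim edge of the degree-$(d-1)$ wheel, so the inductive hypothesis no longer applies; and the $4\times 4$ minors of consecutive spokes involve the entries $(w_i,w_{i+3})$, which are not controlled by the hypotheses, so no single monomial visibly dominates. The residue-field observation (orthogonal isotropic reductions of consecutive large spokes must be proportional, since the Witt index of a nondegenerate ternary form is at most $1$) is a reasonable ingredient, but you stop short of deriving any contradiction from it; note also that $\varphi(\ell_{v_iv_{i+2}})=\infty$ only forces \emph{at least one} of $w_i,w_{i+2}$ to be large, so the ``runs of large spokes'' you want to exploit are not even guaranteed to be consecutive. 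As it stands the argument establishes the lemma only for $d=4$; to use it for general $d$ you must either complete the induction (which requires a genuinely new idea to handle the uncontrolled long-range Gram entries) or do what the paper does and cite~\cite{CSW97}.
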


We shall take for $u$ a non-special vertex of~$K$ of the smallest degree~$d$, and define $v_1,\ldots,v_d$ as in the above lemma.  Since $K$ does not contain ``empty triangles'', we see that $d\ge 4$, and $[v_iv_{i+2}]$ are not edges of~$K$, $i=1,\ldots,d-2$. We cannot apply Lemma~\ref{lem_CSW} immediately to our place $\varphi\colon\L\to\F\cup\{\infty\}$, since the field $\K$ is not necessarily a subfield of~$\L$. Indeed, some of the vertices $v_j$ may belong to the same $\Lambda$-orbits, hence, their coordinates may be dependent in~$\L$. Nevertheless, we can consider the composite place 
$$
\tilde\varphi\colon\K\xrightarrow{\psi}\L\cup\{\infty\}\xrightarrow{\varphi}\F\cup\{\infty\},
$$
where $\psi$ is the place taking independent variables $x_u,y_u,\ldots,z_{v_p}$ to the elements of~$\L$ denoted by the same letters. Applying Lemma~\ref{lem_CSW} to the place~$\tilde{\varphi}$, we obtain that there is an~$i$ such that $\tilde{\varphi}(\ell_{v_iv_{i+2}})\ne\infty$. Then $\varphi(\ell_{v_iv_{i+2}})\ne\infty$.

We replace the two triangles $[uv_iv_{i+1}]$ and~$[uv_{i+1}v_{i+2}]$ by the two triangles $[uv_iv_{i+2}]$ and $[v_iv_{i+1}v_{i+2}]$. Since $[v_iv_{i+2}]$ is not an edge of~$K$, we obtain a simplicial complex homeomorphic to a plane. To keep the simplicial  complex $\Lambda$-periodic we simultaneously replace every two triangles $T_{\lambda}([uv_iv_{i+1}])$ and~$T_{\lambda}([uv_{i+1}v_{i+2}])$ by the two triangles $T_{\lambda}([uv_iv_{i+2}])$ and $T_{\lambda}([v_iv_{i+1}v_{i+2}])$. (All triangles $T_{\lambda}([uv_iv_{i+1}])$ and~$T_{\lambda}([uv_{i+1}v_{i+2}])$, $\lambda\in\Lambda$, are pairwise distinct. Hence we actually can perform all described flips simultaneously.)  We denote the resulting simplicial complex by~$K'$. All edges of~$K'$ are edges of~$K$ except for the edges $T_{\lambda}([v_iv_{i+2}])$. Since $\varphi(\ell_{v_iv_{i+2}})\ne\infty$, we see that $\varphi$ is finite on all squares of the edge lengths of~$K'$. 

The vertex set of~$K'$ coincides with the vertex set~$V$ of~$K$. An edge of~$K$ is not an edge of~$K'$ if and only if it coincides with one of the edges $T_{\lambda}([uv_{i+1}])$. Since $u$ is non-special, any such edge connects vertices belonging to distinct $\Lambda$-orbits. Hence any vertex $w$ that is special in~$K$ is also special in~$K'$. Since $u$ is non-special in~$K$, we see that none of the vertices~$T_{\lambda}(v_j)$, $\lambda\in\Lambda$, $j=1,\ldots,d$, coincide with~$u$. Hence the degree of~$u$ in~$K'$ is equal to~$d-1$ and $u$ is non-special in~$K'$. Therefore, the number of non-special $\Lambda$-orbits of vertices of~$K'$ is not greater than~$n$, and the smallest degree of a non-special vertex of~$K'$ is strictly less than~$d$. Applying the inductive assumption for~$K'$, we obtain that there exists a basis $\lambda,\mu$ of~$\Lambda$ such that $\varphi$ is finite on~$(\lambda,\lambda)$ and $(\lambda,\mu)$, which completes the proof of Lemma~\ref{lem_main}.

\end{document}